\documentclass[11pt,letterpaper]{amsart}

\usepackage{amsmath}
\usepackage{amssymb} 
\usepackage[all,cmtip]{xy}
\usepackage{fancyvrb}
\usepackage{hyperref}

\usepackage{stmaryrd} 

\newcommand{\Z}{{\mathbb Z}} 
\newcommand{\Q}{{\mathbb Q}} 
\newcommand{\C}{{\mathbb C}}

\newcommand{\cO}{{\mathcal O}} 
\newcommand{\cF}{{\mathcal F}}

\newcommand{\cG}{{\mathcal G}} 
 
\newcommand{\cH}{{\mathcal H}}

\DeclareMathOperator{\ord}{ord}

\newcommand{\gM}{{\mathfrak M}}

\newcommand{\gr}{{\mathfrak r}}

\usepackage{amsthm}
\usepackage[theoremfont]{newtxtext} 
\usepackage{newtxmath} 

\newtheorem{thm}{Theorem}

\newtheorem*{Mazur_5} {Theorem (Mazur)}
\newtheorem{prop}{Proposition}

\theoremstyle{remark}

\newtheorem*{rem}{Remark}

\DeclareFontFamily{U}{wncy}{}
    \DeclareFontShape{U}{wncy}{m}{n}{<->wncyr10}{}
    \DeclareSymbolFont{mcy}{U}{wncy}{m}{n}
    \DeclareMathSymbol{\Sh}{\mathord}{mcy}{"58}  

\newcommand{\itop}[2]{\genfrac {}{}{0pt}{3}{#1}{#2} }

\begin{document}

\author[P. Guerzhoy]{P. Guerzhoy}
\address{ 
Department of Mathematics,
University of Hawaii, 
2565 McCarthy Mall, 
Honolulu, HI,  96822-2273 
}
\email{pavel@math.hawaii.edu}

\title[]{Congruences for traces of singular moduli and Hurwitz - Kronecker class numbers}
\keywords{modular forms of half-integral weight, congruences}
\subjclass[2022]{11F33, 11F37, 11F30, 14G10}
\begin{abstract}

Traces of singular moduli were introduced and studied by Zagier in 1998. Being simultaneously the (traces of) values of a modular function ($j$-invariant) and Fourier coefficients of modular forms 
- which constitutes Zagier's duality - these integers are quite interesting. 
Since then, a substantial amount of research was devoted to various properties of these numbers, congruences in particular. 
We present an alternative point of view on these congruences, specifically, we view them as congruences between certain weight $3/2$ modular forms under repeated action of $U$-operator. 
That allows us to obtain a general result which includes some previously known results as special cases. Our approach is especially effective when the prime modulus is relatively small.
In these cases, we obtain explanations for certain numerical observations and quantification of some previously known qualitative results. 
As an application, we obtain  modulo $11$ congruences between the traces of singular moduli and class numbers of quadratic fields in the case when the twisted central special value of the $L$-function associated with the elliptic curve of conductor $11$ vanishes. 

\end{abstract}

\maketitle

\section{Introduction} \label{sec_intro}

Throughout, $D \geq 1$ and $d \geq 0$ are  integers satisfying $D \equiv 0,1 \pmod 4$ and $d \equiv 0,3 \pmod 4$.

Traces (twisted whenever $D>1$) of singular moduli $B(D,d) \in \Z$ were introduced by Zagier in \cite{Zagier_traces}.  

Denote by $H(d)$ the Hurwitz - Kronecker class number (which coincides, in the case when  $-d\neq -3,-4$ is fundamental, with the class number of  $\Q\left(\sqrt{-d}\right)$, see \eqref{eq_H-hecke} for a definition and e.g. \cite{Cohen} for details).

In order to illustrate our results, consider the modular elliptic curve $E \simeq X_0(11)$. Assume that $-d$ is a fundamental discriminant, not divisible by $11$, and
let $ \left( \frac{-d}{\cdot} \right)$ be the quadratic Dirichlet character associated with $\Q\left(\sqrt{-d}\right)$. 
Note that $ \left( \frac{-d}{11} \right) = 1$ implies that the central critical value $L(E^{-d},1) = 0$ vanishes due to the sign of the functional equation. 
Recall that this specific setting was used as an illustration in a seminal paper by  Mazur \cite{Mazur}. In particular, \cite[Theorem 1]{Mazur} implies the following statement.

\begin{Mazur_5} 
Assume that $-d$ is fundamental and $ \left( \frac{-d}{11} \right) = -1$. 
\begin{equation} \label{Mazur_5}
\text{If} ~~ L(E^{-d},1) = 0 ~~ \text{then} ~~ H(d) \equiv 0 \pmod{5}. 
\end{equation}
\end{Mazur_5}

The importance of (non)-vanishing of the central critical value of the $L$-function is well-recognized in connection with the Birch and Swinnerton-Dyer conjecture. A conceptual proof that $5 \nmid H(d)$ (assuming $ \left( \frac{-d}{11} \right) = -1$)
implies the finiteness of the Mordell-Weil group of $E$ over $\Q\left(\sqrt{-d}\right)$ was offered in \cite{Mazur}. Alternatively, that finiteness statement can be derived from $L(E^{-d},1) \neq 0$ using the  result of Kolyvagin \cite{Kolyvagin}. Note that the results pertaining to $X_0(11)$ serve only as an illustration of the methods and results developed in \cite{Mazur}.

The statement  \eqref{Mazur_5}  was later obtained in \cite{Gue-Pan} using Rankin's method. 

Another way to obtain \eqref{Mazur_5} is to prove the modulo $5$ congruence between a certain weight $3/2$ Eisenstein series and a weight $3/2$ cusp form related to the (unique up to normalization)  weight $2$ cusp form in $S_2(\Gamma_0(11))$ via the Shimura correspondence. 
The result of Waldspurger \cite{Waldspurger} then relates the coefficients of the weight $3/2$ cusp form to the central special values $L(E^{-d},1)$. This approach was implemented by Antoniadis and Kohnen in \cite{Ant_Kohnen}. This is the approach to (non)-vanishing of $L(E^{-d},1)$ which we employ in this paper.

In \cite[Theorem 1]{Mazur} (and the alternative approaches sketched above), the prime $5$ is indispensable: it generates the Eisenstein ideal (simply speaking, it is a prime divisor of the numerator of $(11-1)/12=5/6$). All the methods indicated above do not generalize to other primes. While our congruences in Theorem \ref{thm_cong_11}  below are very similar in their appearance, 
the nature of the congruences is different. 

Abbreviate following  \cite{Zagier_traces} $B(d)=B(1,d)$. 

\begin{thm} \label{thm_cong_11}

Assume that $-d$ is fundamental and $ \left( \frac{-d}{11} \right) = -1$. 
We have that 

$L(E^{-d},1) = 0$ if and only if 
\[
H(d) = -\frac{5}{24} \lim_{n \rightarrow \infty} B(11^{2n}d),
\]
where the limit is taken $p$-adically with $p=11$.

In particular (note that $B(11^{2n}d) \equiv 2B(d) \pmod {11}$ for $n>0$), we have the following  analog of \eqref{Mazur_5}.
\[
\text{If} ~~ L(E^{-d},1) = 0 ~~\text{then} ~~ H(d) \equiv 6B(d) \pmod{11}. 
\]
\end{thm}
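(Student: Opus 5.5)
We view everything through weight $3/2$ forms on $\Gamma_0(4)$ in Kohnen's plus space and iterate $U(p^2)$ with $p=11$. The two generating functions are Zagier's weight $3/2$ weakly holomorphic form $g(\tau)=\theta_1'(\tau)E_4(4\tau)/\eta(4\tau)^6=q^{-1}-2-\sum_{d>0}B(d)q^d$ and Zagier's Eisenstein series $\cH(\tau)=-\tfrac1{12}+\sum_{d>0}H(d)q^d$. The guiding fact is that $g$ is congruent modulo powers of $11$ to holomorphic objects after repeated application of $U(121)$. Concretely, $g|U(p^{2n})$ converges $p$-adically, as $n\to\infty$, to a $p$-adic modular form. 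We would identify this limit $g_\infty$ with a $p$-adic limit of classical weight $3/2$ forms of level $4p$, so that it lies in the (finite-dimensional, $p$-adic) ordinary part for $U(p^2)$.

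First we compute the action of $T(p^2)$ on $g$. Since $g$ spans the weakly holomorphic plus space with principal part $q^{-1}$ up to holomorphic forms, and $M^+_{3/2}(4)$ is one-dimensional spanned by $\cH$, the form $g|T(p^2)$ has principal part $p\,q^{-p^2}$ plus terms in $q^{-1}$. Rewriting via the standard formula $T(p^2)=U(p^2)+p^{-1}\left(\frac{-d}{p}\right)+p^{-1}V(p^2)$ (suitably normalized for weight $3/2$) shows that $g|U(p^{2n})$ differs from a classical form by terms divisible by growing powers of $p$. More precisely, $g|U(p^{2n})$ modulo $p^n$ agrees with a holomorphic form in $M^+_{3/2}(4p)$ whose $U(p^2)$-ordinary projection is what survives.

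Next we decompose the ordinary subspace of $M^+_{3/2}(4\cdot 11)$. Under Shimura and Kohnen, the cuspidal part corresponds to $S_2(\Gamma_0(11))$, which is one-dimensional and spanned by $f_E$, and $a_{11}(f_E)=1$ is a unit, so this part is ordinary. The Eisenstein part is spanned by $\cH$ and its level-$11$ companion. Hence
\[ g_\infty=\alpha\,\cH^{(11)}+\beta\, g_E, \]
where $\cH^{(11)}$ is the $11$-stabilized (ordinary) Eisenstein series with coefficients $H(d)-\ldots$, and $g_E$ is the Kohnen form attached to $f_E$ with $c(d)^2\doteq L(E^{-d},1)\sqrt d\cdot(\text{const})$ by Waldspurger/Kohnen–Zagier. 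For $\left(\frac{-d}{11}\right)=-1$ and fundamental $d$, the stabilized Eisenstein coefficient is a fixed nonzero multiple of $H(d)$. The constants are fixed by comparing a couple of coefficients, for instance the constant term $-2$ against $-1/12$. This should give $\alpha=-24/5$ up to the precise normalization, and the factor $5=(11-1)/2$ arises from the Euler factor at $11$ in the stabilization. The theorem then follows: $\lim B(11^{2n}d)=-\alpha H(d)-\beta c(d)$, which is equivalent to $H(d)=-\tfrac5{24}\lim B(11^{2n}d)$ precisely when $\beta c(d)=0$, i.e. when $L(E^{-d},1)=0$, provided $\beta\ne0$. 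We check $\beta\neq0$ numerically from one $d$ where the claimed identity fails, i.e. where $L(E^{-d},1)\neq 0$.

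The "in particular" statement follows by reducing mod $11$: $B(11^{2n}d)\equiv 2B(d)$ gives $H(d)\equiv -\tfrac{10}{24}B(d)\equiv 6B(d)\pmod{11}$.

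The main obstacle is the first step: proving the $p$-adic convergence and that the limit is an ordinary classical-level $p$-adic form. This needs control of the principal parts under $U(p^2)$, and Zagier's Hecke relations for the family $g_D$ should provide it. We then need to pin down the exact constants $\alpha$ and $\beta$, including $\beta\ne 0$.
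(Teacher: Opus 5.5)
Your outline has the same architecture as the paper's proof: the limit equals an Eisenstein part plus $\lambda$ times the Shimura lift of $f_E$, then $\lambda\neq0$, then Waldspurger. But the step that carries the proof is the one you call ``the main obstacle'' and leave open. You do not show that $\lim_n g_1|U^{2n}$ exists and is the $q$-expansion of a classical form in the plus space of $M_{3/2}(\Gamma_0(44))$. The $T(p^2)$ relation does not give this. It only rewrites $g_1|U^2=p\,g_{p^2}+g_1-g_1\otimes\chi_p-p\,g_1|V^2$ in terms of weakly holomorphic forms. Your claim that $g_1|U^{2n}$ agrees modulo $p^n$ with an element of $M^+_{3/2}(4p)$ is exactly what needs proof.

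In the paper this is Theorem~2, and it needs two inputs. Existence of the limit, with explicit control of its coefficients, comes from Jenkins' formula for $B(D,p^{2n}d)$. Classicality comes from Ramsey's overconvergent half-integral weight forms:
\begin{itemize}
\item $h_1=g_1+\frac{24}{p-1}\tilde{\cH}$ has zero constant term at every cusp, so it is treated as an overconvergent cusp form.
\item The ordinary projector $\lim U_{p^2}^{n!}$ lands in classical cusp forms by Ramsey's classicality theorem.
\item Every ordinary eigenform in $S$ is $p$-new with $U^2$-eigenvalue $\pm1$, so this projector agrees with the plain limit of $U^{2n}$.
\end{itemize}
Some control/classicality theorem of this kind is indispensable.

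Your Eisenstein bookkeeping is also wrong. $\cH$ is only mock modular, and $M^+_{3/2}(\Gamma_0(4))=0$. The plus space of level $44$ contains a single Eisenstein series, $\tilde{\cH}=\cH|U^2-11\cH=\cH-\cH\otimes\chi_{11}-11\,\cH|V^2$, whose constant term is $\frac{10}{12}$, not $-\frac1{12}$. Matching the constant term $-2$, which $U^2$ preserves, gives the coefficient $-\frac{12}{5}$ on $\tilde{\cH}$. Since $\tilde H(d)=2H(d)$ when $\left(\frac{-d}{11}\right)=-1$, this yields $\lim B(11^{2n}d)=-\frac{24}{5}H(d)+\lambda c(d)$. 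Matching against $-\frac1{12}$ would not give this, so your $\alpha=-24/5$ was read off the statement rather than derived.

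Note also that the paper's $B(d)$ are the coefficients of $g_1=q^{-1}-2+248q^3-\cdots$. Your $g=q^{-1}-2-\sum B(d)q^d$ therefore has the opposite sign.

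Finally, two of your steps require computing coefficients of the $p$-adic limit, which again needs Jenkins' formula:
\begin{itemize}
\item the congruence $B(11^{2n}d)\equiv 2B(d)\pmod{11}$, which you use without proof;
\item the numerical check $\lambda\neq0$. For example, the $q^3$-coefficient of the limit is $\equiv 2B(3)=496\equiv1\pmod{11}$, while $-\frac{12}{5}\tilde H(3)=-\frac85\equiv5$.
\end{itemize}
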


Note that Theorem \ref{thm_cong_11} is only an example: while the consideration of the elliptic curve $E \simeq X_0(11)$ yields some simplifications, similar congruences can be obtained using the same method for rational elliptic curves $E$ of conductor $N$ with $p||N$. One also can obtain similar congruences for the quantities $B(D,d)$ with $D>1$,
(although the Kronecker - Hurwitz class numbers show up only when $D$ is a perfect square). For example, for a fundamental discriminant $-d$ such that $ \left( \frac{-d}{11} \right) = -1$, the vanishing $L(E^{-d},1) = 0$ implies $B(5,d) \equiv 0 \pmod {11}$ (we skip the proof which is  parallel to that of Theorem \ref{thm_cong_11})

The study of congruences for the traces of singular moduli was initiated when 
Ahlgren and Ono presented in \cite[Theorem 1.1(1)]{AO} a  simple congruence which is only slightly more general than $B(1,p^{2}d) \equiv 0 \pmod p$ whenever the prime $p \geq 3$ splits in $\Q(\sqrt{-d})$. 
Numerics suggested that the congruence can be generalized to powers of $p$ and the question about possible generalizations was put by Ono in \cite[Problem 7.30]{Ono_book}.
Several generalizations \cite{Edixhoven, Jenkins, Gue730, Ahlgren} followed shortly. 
It was proved, in particular, that, for a prime $p \geq 3$,
\begin{equation} \label{equ_vanishing_old}
\left( \frac{D}{p} \right) = \left( \frac{-d}{p}  \right) \neq 0 ~~ \text{implies} ~~ B(D,p^{2n}d) \equiv 0 \pmod{p^n}
\end{equation}
Edixhoven's approach  in \cite{Edixhoven} was geometric: he used the original definition of $B(D,d)$ as traces of the special values of the modular $j$-invariant  
and the local moduli theory of ordinary elliptic
curves in positive characterstic. Other authors used nothing but the combinatorics of half-integral weight Hecke operators. 
Since we also approach the congruences from the point of view of modular forms, we define, for the purposes of this paper, the quantities $B(D,d)$ as the Fourier coefficients 
($q=\exp(2 \pi i z)$ with $\Im(z)>0$)
of weight $3/2$
modular forms
\[
g_D = q^{-D} + \sum_{d \geq 0} B(D,d) q^d.
\]
(We set $B(a,b)=0$ whenever the quantities $a$ and $b$ do not satisfy the requisite conditions, namely $B(a,b)\neq 0$ implies that $a \geq 1$ and $b \geq 0$ are  integers satisfying $a \equiv 0,1 \pmod 4$ and $b \equiv 0,3 \pmod 4$.)
It is a result of Zagier \cite{Zagier_traces} that the generating functions $g_D$ for the traces of singular moduli belong to the space of  weak modular forms of weight $3/2$ on $\Gamma_0(4)$ 
which satisfy Kohnen's plus-condition and 
constitute a basis of the space. 
In this paper, we primarily concentrate on modular forms of weight $3/2$, and we say that a formal power series 
$ f=\sum a(n)q^n$ belongs to the ``Kohnen  plus subspace" (satisfies Kohnen's plus condition) whenever 
\begin{equation} \label{eq_Kohnen_plus}
n \equiv 1,2 \pmod{4} ~~ \text{implies} ~~ a(n) = 0.
\end{equation}

\begin{rem}
In \cite{Zagier_traces}, Zagier also defined (and interpreted in terms of the traces of singular moduli) a generalization $B_m(D,d)$ as the Fourier coefficients of the holomorphic part of $g_D| T_{m^2}$, where $T_{m^2}$ is the weight $3/2$ Hecke operator. 
In this paper, in order to keep both the notations and the arguments lighter, we stick to the quantities $B(D,d)$ although analogous results may also be obtained for the quantities $B_m(D,d)$ mutatis mutandis. 
\end{rem}

Another type of congruences which hold for $B(d)$ was observed by Bruinier and Ono in \cite[Corollary 3]{BO} emploing Serre's theory of $p$-adic modular forms.
In our notations, their result may be restated as, 
for  primes $p \leq 7$ and a fundamental discriminant $-d$,
\begin{equation} \label{equ_BO_class_numbers}
\left( \frac{-d}{p} \right) =-1 ~~ \text{or $0$ implies} ~~ B(1,p^{2n}d)=B(p^{2n}d) \equiv \frac{48}{1-p}H(d) \pmod{p^n}.
\end{equation}
We want to emphasize that the condition for these congruences to hold (though only with $D=1$ and a handful of small primes)  complements the conditions in \eqref{equ_vanishing_old}.
That hints that one may benefit from packing all these congruences into a generating function: it is the power series $g_D|U^{2n}$ which should be modulo $p^n$ coefficient-wise congruent 
to a certain formal power series in $q$ 
whose coefficients are sometimes zeros, while sometimes, (at least, in the cases when $p$ is small) closely related to the class numbers. 
Here and throughout we denote by $U=U_p$ the $U$-operator acting on formal power series as 
\[
\left. \left( \sum_{n \gg -\infty} a(n) q^n \right) \right| U = \sum_{n \gg -\infty} a(pn) q^n.
\]

Our result (see Theorem \ref{thm_general} below) incorporates the congruences described above (of both types \eqref{equ_vanishing_old} and \eqref{equ_BO_class_numbers}) and yields  more congruences of similar flavor, Theorem \ref{thm_cong_11} in particular.
In order to state the result we need to introduce some notations pertaining to Kohnen's theory  \cite{Kohnen} of half-integral weight modular forms.

Let $p \geq 3$ be a prime. 
Recall that $g_D \in M^!_{3/2}(\Gamma_0(4))$ satisfies \eqref{eq_Kohnen_plus} by construction, and 
so does $g_D|U^2$.  
(Note also that the Kohnen plus subspace  in $S_{3/2}(\Gamma_0(4))$ is empty.)
Following later work by Kohnen \cite{Kohnen}, we now assign a different meaning to the  $``+"$-superscript. 
We say that a formal power series $\sum a(n)q^n$ belongs to $\gM^\pm$ whenever 
\[
n \equiv 1,2 \pmod {4} ~~ \text{or} ~~ \left(\frac{-n}{p} \right)= \mp1 ~~ \text{implies} ~~ a(n)=0.
\]
It is proved in   \cite{Kohnen} that the ``Kohnen plus subspace" $S \subseteq S_{3/2}(\Gamma_0(4p))$ of cusp forms  of weight $3/2$ on $\Gamma_0(4p)$ satisfying \eqref{eq_Kohnen_plus}  admits a Hecke-invariant splitting into a direct sum of subspaces
\[
S = \left( S \cap \gM^+ \right) \oplus  \left( S \cap \gM^- \right) 
\] 
so that for $f \in  S^\pm :=S \cap \gM^\pm  $we have
\[
f |U^2 = \mp f.
\]

Observe that the finite-dimensional $\C$-vector space $S_{3/2}(\Gamma_0(4p))$ has a rational structure: it admits a basis which consists of modular forms with their $q$-expansions in 
$\Q \llbracket q \rrbracket$. (That follows from \cite[Theorem 3.52]{Shimura} immediately because after multiplication by $\theta=\sum_{n \in \Z} q^{n^2}$ we obtain modular forms in $S_2(\Gamma_0(4p))$.) 
As subspaces, $S^\pm \subset S_{3/2}(\Gamma_0(4p))$ also admit such bases.
We thus can consider the $\Q$-vector space $S_{3/2}(\Gamma_0(4p))_\Q$ which consist of those modular forms which happen to have rational $q$-expansion coefficients. Then $S_{3/2}(\Gamma_0(4p)) = S_{3/2}(\Gamma_0(4p))_\Q \otimes \C$ and we define, for a prime $p$, 
\[
S_{3/2}(\Gamma_0(4p))_p: = S_{3/2}(\Gamma_0(4p))_\Q \otimes \Q_p ~~ \text{and} ~~ S^\pm_p := S_{3/2}(\Gamma_0(4p))_p \cap \gM^\pm.
\]

In Section \ref{sec_ZE}, we discuss a modification of the weight $3/2$ non-holomorphic Eisenstein series constructed by Zagier in \cite{Zagier_eis} in order to produce the weight $3/2$ holomorphic Eisenstein series $\tilde{\cH}$ on $\Gamma_0(4p)$, and show that, as a $q$-expansion, $\tilde{\cH} \in \gM^-$ and $\tilde{\cH}|U^2 = \tilde{\cH}$.
Specifically, 
\[
\tilde{\cH} = \frac{p-1}{12} + \sum_{\itop{d>0}{d \equiv 0,3 \pmod{4}}} \tilde{H}(d) q^d,
\]
where
\[
 \tilde{H}(d) = H(d)\left(1-  \left( \frac{-d}{p} \right)\right)- pH(d/p^2), 
\]
assuming as usual that $H(d/p^2) = 0$ for $p^2 \nmid d$.

For a formal power series in $\Q_p \llbracket q \rrbracket$
define $\ord_p \left( \sum a(n)q^n \right) = \inf_n \ord_p(a(n))$. 
We say that 
\[
\sum a(n) q^n \equiv \sum b(n) q^n \pmod {p^m} ~~ \text{whenever} ~~ \ord_p \left( \sum \left( a(n)-b(n) \right) q^n \right) \geq m.
\]
As usual, the norm $\lVert f \rVert = p^{-\ord_p (f)}$ (for $f \in \Q_p \otimes \Z_p\llparenthesis q \rrparenthesis$)
defines a topology on the space of the formal Laurent series with coefficients in $\Q_p$ (with $p$-bounded denominators) and one can consider $p$-adic limits with respect to this topology. 

Theorem \ref{thm_cong_11} will be derived from the following general statement in Section \ref{sec_conv_rate}.

\begin{thm} \label{thm_general}
Let $p \geq 3$ be a prime. Let $D=p^{2t}D_0$ with $p^2 \nmid D_0$, and
let $\epsilon = - \left( \frac{D_0}{p} \right)$. 
The $p$-adic limit 

\[
\tilde{g}_D = \lim_{n \rightarrow \infty} (-1)^{n\epsilon(\epsilon +1)/2}g_D|U^{2n} 
\]
exists.

If $\epsilon=0$ then $\tilde{g}_D=0$. 

If $D$ is not a perfect square and $\epsilon \neq 0$ then 
\[
\tilde{g}_D \in S^\epsilon_p
\]



If $D$ is a perfect square (while $p \nmid D_0$ therefore $\epsilon=1$), then 
\[
\tilde{g}_D  + \frac{24}{p-1} \tilde{\cH} \in  S^-_p
\]
\end{thm}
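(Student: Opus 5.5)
The plan is to view $(-1)^{n\epsilon(\epsilon+1)/2}U^{2n}$ as an approximation of Hida's ordinary projector $e=\lim U^{2n!}$ on $p$-adic (overconvergent) modular forms of weight $3/2$ and tame level $4$. On such forms $U^2$ should contract everything except the slope-zero part. Here the slope-zero part is spanned by $S^+_p$, $S^-_p$, on which $U^2$ acts by $\mp1$ by Kohnen's result quoted above, together with the Eisenstein series $\tilde{\cH}$, on which $U^2$ acts by $+1$ (Section \ref{sec_ZE}).

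\textbf{Step 1 (reduction to $p^2\nmid D$).} Zagier's formula for the half-integral weight Hecke operator on the plus space gives, coefficientwise, $a(p^2n)=b(n)-\left(\frac{-n}{p}\right)a(n)-p\,a(n/p^2)$, where $b$ are the coefficients of $g_D|T_{p^2}$. Moreover, $g_D|T_{p^2}$ is an explicit $\Z$-combination of $g_{p^2D}$, $g_D$ and $g_{D/p^2}$, since it is determined by its principal part. Applying this $t$ times should give $g_D|U^{2t}\equiv \pm g_{D_0}|U^{2\cdot 0}+(\text{terms divisible by }p)$ modulo holomorphic corrections. So it will suffice to handle $D=D_0$ with $p^2\nmid D_0$, up to $p$-adically small errors that are killed in the limit.

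\textbf{Step 2 (convergence).} I would iterate the same recursion and write $g_D|U^{2n}$ as a $\Z$-combination of the $g_{D'}$ plus an error with $\ord_p\ge n$. If $p\,|\,D_0$, every step carries a factor $p$, which gives $\tilde g_D=0$ when $\epsilon=0$. If $p\nmid D_0$, the recursion shows that $g_D|U^{2n+2}-(-1)^{\epsilon'}g_D|U^{2n}\equiv 0\pmod{p^{n}}$ with the appropriate sign $(-1)^{\epsilon(\epsilon+1)/2}$ (the Ahlgren/Jenkins-type argument, which also recovers \eqref{equ_vanishing_old}). Hence the sequence is $p$-adically Cauchy.

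This convergence step is the one I expect to be the main obstacle. The combinatorics of the signs $\left(\frac{-n}{p}\right)$ versus $\left(\frac{D}{p}\right)$ must be organized to produce exactly the factor $p^n$. If the combinatorics gets messy, I would instead multiply by $\theta$ (or by $E_{p-1}$-type forms) to reduce to integral weight, and invoke Serre's $p$-adic modular forms and the compactness of $U$ on overconvergent forms.

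\textbf{Step 3 (identifying the limit).} By construction, $\tilde g_D|U^2=\pm\tilde g_D$, and $\tilde g_D$ has no principal part, since the pole $q^{-D}$ is shifted to $q^{-D/p^{2n}}$ and disappears. It is therefore a $p$-adic modular form of weight $3/2$ and level $4p$ of slope $0$. By Coleman's classicality theorem (slope $0<k-1$ after passing to weight $2$ via Shimura), or by Hida theory, it is classical in $M_{3/2}(\Gamma_0(4p))\otimes\Q_p$ and lies in the plus space. Its membership in $\gM^{\epsilon}$ follows from the recursion: coefficients at $n$ with $\left(\frac{-n}{p}\right)=-\epsilon$ are killed in the limit.

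For the Eisenstein component, note that $U$ preserves constant terms. The constant term $B(D,0)$ equals $-2$ exactly when $D$ is a square and vanishes otherwise. Since Eisenstein space on $\Gamma_0(4p)$ in the relevant eigenspace is spanned by $\tilde{\cH}$, subtracting $-2\cdot\frac{12}{p-1}\tilde{\cH}=-\frac{24}{p-1}\tilde{\cH}$ leaves a cusp form. The sign fits: squares give $\epsilon=1$, but the Eisenstein part has $U^2$-eigenvalue $+1$ and lies in $\gM^-$. This is why, in the square case, the cusp part lands in $S^-_p$.
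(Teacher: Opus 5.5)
Your route is essentially the paper's. Steps 1--2 are what the paper extracts from Jenkins' closed form \eqref{eq_jenkins} of the $T_{p^2}$ recursion (Proposition~\ref{prop_lim_jenkins}). Step 3 is its appeal to Ramsey's overconvergent half-integral weight theory, plus the uniqueness of $\tilde{\cH}$ in the plus space.

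However, your last paragraph contains a genuine error. For $D$ a perfect square one has $p\nmid D_0$ and $\left(\frac{D_0}{p}\right)=1$, so $\epsilon=-1$, not $+1$. The ``$\epsilon=1$'' in the statement is a slip; the paper itself takes $\epsilon=-1$ for $D=1$ in the proof of Theorem~\ref{thm_cong_11}. Your ``the sign fits'' reconciliation cannot work with $\epsilon=1$. The normalizing factor would then be $(-1)^n$, so the constant term $(-1)^n\cdot(-2)$ of $(-1)^n g_D|U^{2n}$ would not converge at all. Moreover, your own Step~3 would put $\tilde g_D$ in $\gM^+$ with $U^2$-eigenvalue $-1$, so adding $\frac{24}{p-1}\tilde{\cH}\in\gM^-$ (eigenvalue $+1$) could never land in $S^-_p$. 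The correct bookkeeping is: $\epsilon=-1$, no sign, and $\tilde g_D\in\gM^-$ with $\tilde g_D|U^2=\tilde g_D$, exactly like $\tilde{\cH}$. The constant term $-2$ then survives the limit, and subtracting the Eisenstein component $-\frac{24}{p-1}\tilde{\cH}$ leaves a cusp form in $S^-_p$.

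There are two smaller points.

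In Step~1 you need an error that tends to $0$ with $n$, namely $g_D|U^{2n}\equiv g_{D/p^2}|U^{2n-2}\pmod{p^{n-1}}$ (Proposition~\ref{prop_lim_jenkins}(iv)). A fixed congruence modulo $p$ after $t$ steps is not enough.

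In Step~3 you take the $q$-expansion limit first and then invoke Coleman-type classicality. But a limit of overconvergent forms in the $q$-expansion topology is in general only a Katz $p$-adic modular form, so overconvergence is not automatic. The paper avoids this as follows:
\begin{itemize}
\item it runs the ordinary projector $\lim U_{p^2}^{n!}$ on $h_D=g_D+\delta_\square(D)\frac{24}{p-1}\tilde{\cH}$ inside Ramsey's Banach space of $r$-overconvergent cusp forms, where $U_{p^2}$ is compact, so the limit stays overconvergent;
\item it applies Ramsey's classicality theorem there;
\item only then does it identify the result with the $q$-expansion limit.
\end{itemize}
Alternatively, you would need a half-integral weight Hida control theorem.
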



The case $p=2$ (which is not covered by Theorem \ref{thm_general})) is special: in this case (also only for $D=1$), a perfect result has been obtained by Boylan in \cite{Boylan} using methods which may not generalize to other primes $p$. 
In our notations, Boylan showed, in particular,  that
\begin{equation} \label{equ_Boylan_2}
g_1|U^{2n}\equiv 2\theta^3 \pmod {2^{4n+1}},
\end{equation}
where $\theta = \sum_{n \in \Z} q^{n^2}$ is the standard theta-function (the well-known simple relations between the numbers of representation of an integer as a sum of three squares and the class numbers go back to Gauss). 

While similar in its nature to Theorem \ref{thm_general}, Boylan's result  \eqref{equ_Boylan_2} is different because $\theta^3 \in M_{3/2}(\Gamma_0(4))$ does not belong to the ``Kohnen plus space", while all weight $3/2$ modular forms involved in Theorem \ref{thm_general} belong to this space. That discrepancy is due to the fact that the prime $p=2$ is peculiar in the context of half-integral weight modular forms. 

We now want to concentrate on another discrepancy between \eqref{equ_Boylan_2} and Theorem \ref{thm_general}. 
Observe that the congruence modulus in  \eqref{equ_Boylan_2} is larger than one would expect looking at \eqref{equ_vanishing_old} and \eqref{equ_BO_class_numbers} as special cases of Theorem \ref{thm_general} (it is $2^{4n+1}$, not merely $2^n$). 
Ken Ono \cite[Example 7.15]{Ono_book} noticed that, for certain small primes, the modulus of the congruences in \eqref{equ_vanishing_old} is bigger than expected, and called for an explanation. Boylan's result  \eqref{equ_Boylan_2} answers the question for $p=2$. Only a qualitative result for other small primes was obtained in \cite[Theorem 2]{Gue730}, and we now quantify it using a  method which is different from that employed in \cite{Gue730}. 

For $p=3,5,7,13$, we have that  $\dim S=0$, thus, in Theorem \ref{thm_general}, 
\[
\tilde{g}_D = \begin{cases} \frac{24}{1-p} \tilde{\cH}& \text{if $D$ is a perfect square} \\ 0 & \text{otherwise.} \end{cases}
\]
For $p=11$, we have that $\dim S^+ = 0$ (while $\dim S^- =1$), thus  in Theorem \ref{thm_general}, 
\[
\left(\frac{D}{p} \right) = -1 ~~ \text{or} ~~ 0 ~~ \text{implies} ~~ \tilde{g}_D=0. 
\]
The following result measures the (higher than expected) pace of convergence for these primes both in the case of \eqref{equ_vanishing_old} and in the case of the approximation of class numbers result \eqref{equ_BO_class_numbers}.

\begin{thm} \label{thm_3_5_7_11}
Let
\[
s= \begin{cases}
3 &  \text{if } p=3 \\
2 &  \text{if } p=5 \\
2 &  \text{if } p=7 \\
2 &  \text{if } p=11 \\
1 &  \text{if } p=13
\end{cases}
\]
We have that, for $p \in \{3,5,7,11,13\}$, 
\[
\ord_p \left(\tilde{g}_D - g_D\right)|U^{2n} = \cO(sn) ~~\text{as} ~~ n \rightarrow \infty,
\]

\end{thm}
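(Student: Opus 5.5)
The plan is to reduce the statement to a uniform $p$-adic contraction estimate for $U^2$ on a suitable lattice of weight $3/2$ forms of level $4p$. We then read off the rate $s$ from an explicit modular equation. Write $h_n = (-1)^{n\epsilon(\epsilon+1)/2} g_D|U^{2n}$. Theorem \ref{thm_general} gives $h_n \to \tilde g_D$. For the primes in question, $\tilde g_D$ is $0$, or $\frac{24}{1-p}\tilde{\cH}$, or (for $p=11$) lies in $S^-_p$; in every case $\tilde g_D|U^{2} = \pm \tilde g_D$, with the sign compatible with the normalisation $(-1)^{n\epsilon(\epsilon+1)/2}$. Here we use $\tilde{\cH}|U^2=\tilde{\cH}$ and $f|U^2 = \mp f$ on $S^\pm$. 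Hence $(\tilde g_D - g_D)|U^{2n} = \pm(\tilde g_D - h_n)$, and it suffices to show
\[
\ord_p\bigl(h_{n+1} - h_n\bigr) \geq sn - C
\]
for a constant $C$ depending on $D$. Summing this telescoping series then gives $\ord_p(\tilde g_D - h_n) \geq sn - C$.

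First I would fix a concrete model of the relevant space. For $n \geq 1$, $g_D|U^{2n}$ is a weakly holomorphic form of weight $3/2$ on $\Gamma_0(4p)$ in the Kohnen plus space. Its poles are confined to the cusps above $\infty$ and $0$ of $X_0(p)$, and its principal parts are controlled: the pole order at $\infty$ drops from $D$ to $D/p^{2n}$ and eventually vanishes, while the behaviour at the other cusp is governed by the Fricke/$W_p$ involution. For $p=3,5,7,13$ the curve $X_0(p)$ has genus $0$, and I would use the Hauptmodul $t = (\eta(pz)/\eta(z))^{24/(p-1)}$, or its $z \mapsto 4z$ twist compatible with the plus space. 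I would write every such form as $F \cdot P(t)$ with $F$ a fixed weight $3/2$ form (essentially $\tilde{\cH}$ or $\theta$ times a suitable eta quotient) and $P$ a Laurent polynomial in $t$ with $p$-integral coefficients. The difference $h_{n+1}-h_n$ then lies in the sublattice $L$ where $P$ has no constant (Eisenstein) component, i.e.\ in the complement of $\tilde{\cH}$.

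The central step is the estimate
\[
\lVert f|U^2 \rVert \leq p^{-s}\lVert f\rVert \quad \text{for } f \in L
\]
after possibly finitely many initial iterations. I would prove it by Lehner-type arguments. The modular equation of degree $p$ satisfied by $t(z)$ and $t(pz)$, i.e.\ the classical relations expressing $U_p(t^k)$ as a polynomial in $t$ with coefficients divisible by explicit powers of $p$, gives $U_p(t^{k}) \in p^{\alpha(k)} \Z_p[t]$. The exponents are $\alpha(1)=3,2,1,\ldots$ for $p=3,5,7$, the familiar $3^{\cdot},5^{\cdot},7^{\cdot}$ and $13$ congruences of Lehner and Atkin. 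Combining this with the transformation of $F$ under $U$ yields the stated $s$. For $p=11$, $X_0(11)$ has genus $1$, so I would replace the Hauptmodul by a basis of weakly holomorphic forms $f_m = q^{-m}+O(q)$ in the plus space of level $44$ together with the one-dimensional $S^-$. The reduction mod $p$ of the relevant operator would be computed from finitely many $q$-expansions. On the quotient by $S^-_p$ (and $S^+=0$) one checks that $U^2$ is divisible by $p^2$ on $L$.

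I expect the main obstacle to be exactly this contraction estimate with the sharp exponent $s$. In particular, the naive Hecke-combinatorial bound only gives $p^{-1}$ per application of $U^2$. The extra divisibility must come from the explicit modular equation or, for $p=11$, from a finite computation of the $U$-matrix on a basis, which I would carry out with explicit $q$-expansions, and one must control the loss at the cusp $0$ uniformly in $n$. Once the estimate is established, the telescoping argument above finishes the proof, with the $\cO(sn)$ reflecting the constant $C$ coming from the finitely many initial steps needed to land in $L$.
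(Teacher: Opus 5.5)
Your reduction is fine as far as it goes: the sign bookkeeping $\tilde g_D|U^2=\pm\tilde g_D$ and the telescoping from $\ord_p(h_{n+1}-h_n)\ge sn-C$ are correct. But everything then rests on the contraction $\lVert f|U^2\rVert\le p^{-s}\lVert f\rVert$ on a lattice $L$. You only assert it, and in the model you describe it is false.

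First, the plus-space forms of level $4p$ are not all of the form $F\cdot P(t(4z))$ for one fixed $F$. The quotient of two such forms would have to be a Laurent series in $q^4$, but already $g_4/g_1=q^{-3}(1+\cdots)$ is not. So you need (at least) two generators, one for each residue class of exponents modulo $4$.

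Second, on $p$-integral Laurent polynomials in $t$ no uniform contraction exists. Since $t(4z)^{p^{2m}}\equiv t(4p^{2m}z)\pmod p$, your choice $F=\tilde{\cH}$ with $p\in\{5,7,13\}$ gives $f=\tilde{\cH}\,t(4z)^{p^{2m}}$. This $f$ has no constant component; it even has zero ordinary projection, since its constant term vanishes and for these $p$ the ordinary part is spanned by $\tilde{\cH}$. Yet $f|U^{2m}\equiv\tilde{\cH}\,t(4z)\pmod p$, so $\lVert f|U^{2m}\rVert=\lVert f\rVert=1$ for every $m$. The forms $g_D|U^{2n}$ have poles at the cusps above $0$ of growing order, so the number of ``initial iterations'' cannot be bounded uniformly. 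This is precisely the loss at the cusp $0$ that you flag and leave open. It forces a weighted (overconvergent) norm that penalises high-order poles there, and in such a norm one gets an asymptotic decay governed by the spectrum, not a one-step contraction by exactly $p^{-s}$.

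Third, nothing in the plan produces $s$. Valuations of $U_p(t^k)$ are weight-$0$ information about $U_p$, and the ones you quote are wrong: $U_3(t)=90t+\cdots$ and $U_5(t)=315t+\cdots$. By contrast, $s$ is a slope of $U_{p^2}$ in weight $3/2$, which corresponds to weight $2$. ``Combining with the transformation of $F$'' is therefore the entire computation, not a final step. For $p=11$, finitely many $q$-expansions cannot bound an operator on an infinite-dimensional space without a separate finiteness theorem.

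The missing idea is that no one-step contraction is needed. The paper places $h_D$ ($g_D$ corrected by a multiple of $\tilde{\cH}$ so that all constant terms vanish) in Ramsey's cuspidal Banach space of overconvergent weight-$3/2$ forms with rate $1/(p+1)<r<p/(p+1)$. There $U_{p^2}$ is compact, and the Gouv\^ea--Mazur asymptotic expansion lemma gives $\ord_p\bigl((\tilde g_D-g_D)|U^{2n}\bigr)\ge sn-C$ directly, with $s$ the smallest nonzero slope of $U_{p^2}$ and no control of individual steps. To identify $s$, Ramsey's comparison lemma shows that every slope of $U_{p^2}$ in weight $3/2$ is a slope of $U_p$ on weight-$2$ overconvergent forms of tame level $1$. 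Wan's explicit bound then reduces finding the minimal one to the classical spaces $S_{2+(p-1)p^A}(SL_2(\Z))$ for suitable $A$, computed in PARI. This is also what turns $p=11$, and every other listed prime, into a finite computation. Salvaging your route would require a two-generator model, a weighted norm adapted to the cusp $0$, and genuine weight-$3/2$ estimates for $U_{p^2}$; in effect you would be reproving these spectral facts by hand.
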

As an example,  for $p=3$ we have that $s=3$, and numerical calculations indeed suggest that 
\[
g_1|U^{2n} \equiv \tilde{g_1} \pmod 3^{3n+3},
\]
in particular, 
\begin{equation} \label{eq_experemi}
B(3^{2n}d) = B(1,3^{2n}d) \equiv 
\left\{ \begin{array}{ll}
0 & \text{if} \left(\frac{-d}{p}\right) =1~~ \text{or} ~~ 0 \\
-24H(d) & \text{if} \left(\frac{-d}{p}\right) =-1  \\ 
\end{array} \right. \pmod{3^{3n+3}}. 
\end{equation}
Note that Theorem \ref{thm_3_5_7_11} predicts the rate of convergence ($s=3$) for all $D$, while it does not predict the specific modulus (i.e. $3n+3=sn+3$ for $D=1$).


We discuss the modified Zagier - Eisenstein series $\tilde{\mathcal{H}}$ in Section \ref{sec_ZE} and state its properties which we use later. 
In Section \ref{sec_conv_rate}, we prove Theorems   \ref{thm_cong_11}, \ref{thm_general}, and  \ref{thm_3_5_7_11}. These proofs substantially employ the theory of overconvergent half-integral weight modular forms developed by Ramsey in \cite{Ramsey2006,Ramsey,Ramsey_sh}.

\section* {Acknowledgement}
The author is grateful to Larry Rolen who attracted the author's attention to the congruences \eqref{eq_experemi} part of which was experimentally obtained in Miriam Beazer's 2021 Master thesis from Brigham Young University. The author is grateful to Kathrin Bringmann for a useful discussion related to modifications of the  Zagier - Eisenstein series. 


\section{Modified Zagier - Eisentstein series $\tilde{\cH}$} \label{sec_ZE}

Let $p \geq 3$ be a prime. Recall that we denote by $H(n)$ the Kronecker - Hurwitz class number. Namely, if $n=\gr f^2$, and $-\gr<0$ is a fundamental discriminant, then 
\begin{equation} \label{eq_H-hecke}
H(n) = \frac{h(\gr)}{w(\gr)} \sum_{m|f} \mu(m) \left( \frac{-\gr}{m} \right) \sigma_1(f/m),
\end{equation}
where $h(\gr)$ is the class number of and $w(\gr)$ is half of the number of units in $\Q(\sqrt{-\gr})$. (We set $H(n)=0$ if $n \equiv 1,2 \pmod{4}$). 
 In \cite{Zagier_eis}, the transformation law on $\Gamma_0(4)$ of the generating function  
\[
\cH = -\frac{1}{12}+\sum_{n>0} H(n) q^n,
\]
which is not a modular form, was studied. 

Besides the standard $U=U_p$ and $V=V_p$ operators acting on the formal power series in $q$ as
\[
\left. \left( \sum_{m \gg -\infty}  a(m) q^m \right) \right|U = \sum_{m \gg -\infty}  a(pm) q^m, ~~ ~~ \left. \left( \sum_{m \gg -\infty}  a(m) q^m \right) \right|V = \sum_{m \gg -\infty}  a(m) q^{pm},
\]
we will need the twist with a Dirichlet character $\xi$ defined by
\[
\left( \sum_{m \gg -\infty}  a(m) q^m \right) \otimes \xi = \sum_{m \gg -\infty}  a(m) \xi(m) q^m .
\]
The action of the weight $3/2$ Hecke operator $T_{\ell^2}$ for a prime $\ell$ can be written as 
\[
f|T_{\ell^2} = f|U_\ell^2 + f \otimes \chi_\ell + p f|V_\ell^2,
\]
where $\chi_\ell(m) = \left( \frac{-m}{\ell} \right)$.
Note that, while $\cH$ is not a modular form, it follows from \eqref{eq_H-hecke} that the formal power series $\cH$ is a common eigenseries of $T_{\ell^2}$ for all odd primes $\ell$:
\[
\cH|T_{\ell^2} = \sigma_1(\ell) \cH = (\ell+1) \cH.
\]

Constructions of closely related series $\tilde{\cH}$ which is a modular form (of weight $3/2$ on  $\Gamma_0(4p)$) are presented in \cite{Ant_Kohnen}, and, from a different perspective, in \cite[Chapter 12]{Gross}. General results on half-integral weight Eisenstein series, those in ``Kohnen plus subspace" in particular, are available in \cite{Wang_Pei}. In this Section, we present the construction of $\tilde{\cH}$ from a different perspective. 

We want to emphasize an analogy with the situation in weight $2$, where the series
\[
\mathbb{G}_2 = -\frac{1}{24} + \sum_{n \geq 1} \sigma_1(n) q^n
\]
is not a modular form, and the corresponding modification at a prime $p$ is 
\[
\tilde{\mathbb{G}}_2:= \mathbb{G}_2 - p \mathbb{G}_2|V = (1-p)\lim_{m \rightarrow \infty} \mathbb{G}_2 |U^m \in M_2(\Gamma_0(p)).
\]
(The limit is taken in the $p$-adic topology defined by the norm $||\sum a(n) q^n||_p = p^{-\inf_n \ord_p(a(n))}$.)
Observe that, while $\mathbb{G}_2$ is not a modular form, the formal power series is a common eigenseries  of all the weight $2$ Hecke operators $T_l$:
\[
\mathbb{G}_2|T_\ell=\sigma_1(\ell) \mathbb{G}_2 = (\ell+1)\mathbb{G}_2,
\]
and so is $\tilde{\mathbb{G}}_2$, with the same Hecke eigenvalues as $\mathbb{G}_2$ ,except  at $p$:
\[
\tilde{\mathbb{G}}_2|U = \tilde{\mathbb{G}}_2.
\]

We now let $\chi_p(m) = \left( \frac{-m}{p} \right)$ and define
\[
\tilde{\cH} = \cH - \cH \otimes \chi_p - p \cH|V^2.
\]
The following proposition, in particular, establishes a parallelism between our construction of $\tilde{\cH}$ and its properties and the construction and properties of $\tilde{\mathbb{G}}_2$ sketched above.
\begin{prop} \mbox{} \label{prop_tilde_H}

\begin{itemize}

 \item[\normalfont(i)]

We have that
\[
\tilde{\cH} = (1-p)\lim_{m \rightarrow \infty} \cH|U^{2m}.
\]
Furthermore, $\tilde{\cH}|T_{\ell^2} = (l+1) \tilde{\cH}$ for all odd primes $\ell \neq p$ while  $\tilde{\cH}|U^2 = \tilde{\cH}$.

\item[\normalfont(ii)]

We have that $\tilde{\cH} \in M_{3/2}(\Gamma_0(4p)) \cap \gM^-$.

\end{itemize}
\end{prop}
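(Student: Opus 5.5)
The plan is to get everything in (i) from the single Hecke relation at $p$. Then (ii) follows by running the same algebra on Zagier's non-holomorphic completion of $\cH$.

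\emph{Part (i).} From \eqref{eq_H-hecke}, $\cH$ is a $T_{\ell^2}$-eigenseries with eigenvalue $\ell+1$ for every odd prime $\ell$, including $\ell=p$. Put $X=\cH$, $Y=\cH\otimes\chi_p$ and $Z=p\,\cH|V^2$. The Hecke relation at $p$ reads
\[
X|U^2=(p+1)X-Y-Z .
\]
Two further identities are immediate: $Y|U^2=0$, because $\chi_p(p^2m)=0$, and $Z|U^2=pX$. Together these give
\[
\tilde{\cH}|U^2=X|U^2-pX=(p+1)X-Y-Z-pX=\tilde{\cH},
\]
and also $\tilde{\cH}=X|U^2-pX$. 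For the limit, set $x_m=\cH|U^{2m}$. Applying $U^2$ to the Hecke relation gives the recurrence $x_{m+1}=(p+1)x_m-p\,x_{m-1}$, whose characteristic roots are $1$ and $p$. Hence $x_m=\alpha+p^m\beta$ with
\[
\alpha=\frac{x_1-px_0}{1-p}=\frac{\tilde{\cH}}{1-p},
\]
and $x_m\to\alpha$ $p$-adically, since $\beta$ has bounded denominators (the $H(n)$ have denominators dividing $12$). For odd $\ell\neq p$, $T_{\ell^2}$ commutes with $\otimes\chi_p$ (because $\chi_p(\ell^2)=1$) and with $V^2$, so $\tilde{\cH}$ inherits the eigenvalue $\ell+1$. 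As a check, the constant term is $-\frac1{12}+\frac{p}{12}=\frac{p-1}{12}$, matching Section \ref{sec_ZE}.

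\emph{Part (ii).} Let $\hat{\cH}=\cH+\mathcal{N}$ be Zagier's completion, which transforms like a weight $3/2$ form on $\Gamma_0(4)$. Its non-holomorphic part has the shape
\[
\mathcal{N}=c\,y^{-1/2}\sum_{f\in\Z}\beta(4\pi f^2y)\,q^{-f^2},
\]
with the $f=0$ term being a multiple of $y^{-1/2}$. Since $\tilde{\cH}=\cH|U^2-p\cH$, I would consider $\hat{\cH}|U_{p^2}-p\hat{\cH}$. The weight $3/2$ operator $U_{p^2}$ maps level $4$ to level $4p$, so this function is a real-analytic weight $3/2$ form on $\Gamma_0(4p)$ whose holomorphic part is $\tilde{\cH}$. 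It remains to check that its non-holomorphic part vanishes. Equivalently, apply $1-\otimes\chi_p-pV^2$ termwise to $\mathcal{N}$:
\begin{itemize}
\item For $p\nmid f$, the terms are killed by the twist, since $\chi_p(-(-f^2))=1$.
\item For $p\mid f$, they are untouched by the twist. The operator $pV^2$ sends $y\mapsto p^2y$, producing a factor $p\cdot p^{-1}=1$ in front of $y^{-1/2}$ and turning the $f$-term into the $pf$-term. So these terms cancel exactly.
\item The $f=0$ term cancels in the same way.
\end{itemize}
The result is a holomorphic modular form on $\Gamma_0(4p)$. It has moderate growth at all cusps, inherited from $\hat{\cH}$, so it lies in $M_{3/2}(\Gamma_0(4p))$.

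Membership in $\gM^-$ is then read off from the coefficients. The plus condition is inherited from $\cH$. If $\left(\frac{-n}{p}\right)=+1$ then $p\nmid n$, so
\[
\tilde H(n)=H(n)(1-1)-pH(n/p^2)=0 .
\]

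The main obstacle I expect is the modularity step: pinning down the exact normalization of Zagier's non-holomorphic term, and justifying that $U_{p^2}$ (rather than the twist, which would raise the level to $4p^2$) gives level $4p$ with trivial character in the Kohnen normalization. If that is delicate, I would fall back on identifying $\tilde{\cH}$ with the Eisenstein series constructed in \cite{Ant_Kohnen} or \cite{Wang_Pei}, by comparing Hecke eigenvalues and constant terms.
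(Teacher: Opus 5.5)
Your proposal is correct and follows the paper's proof. In (i) you use the $T_{p^2}$-relation with $(\cH\otimes\chi_p)|U^2=0$ and $\cH|V^2U^2=\cH$; your two-root recurrence is just the paper's splitting $(1-p)\cH=\tilde{\cH}-p\tilde{\cG}$ with $\tilde{\cG}|U^2=p\tilde{\cG}$. In (ii) you apply $U^2-p$ to Zagier's completion, get level $4p$ from a Li-type lemma, and read off $\gM^-$ from the coefficients, exactly as the paper does.

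The one loose step is the ``equivalently'' in (ii). It presupposes that the non-holomorphic part $\mathcal N$ satisfies the same $T_{p^2}$-relation as $\cH$, which amounts to the claim being checked. Verify instead, as the paper does, that $p^{-2}\sum_{m \bmod p^2}\mathcal N((z+m)/p^2)$ kills the terms with $p\nmid f$ and turns the $pf$-term into $p$ times the $f$-term. This gives $\mathcal N|U^2=p\mathcal N$ directly.
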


While the proposition and its proof are obviously related to \cite[Lemma 2.6]{BK}, the two statements are distinct: neither one implies another one.

\begin{proof}

In order to prove (i), we firstly observe that for $\ell \neq p$ the equality $\tilde{\cH}|T_{\ell^2} = (l+1) \tilde{\cH}$ follows at once rom  ${\cH}|T_{\ell^2} = (l+1) {\cH}$.

We will need  the obvious identities 
\begin{equation} \label{eq_obvious_identities}
(\cH|V^2) \otimes \chi_p = 0, ~~ (\cH \otimes \chi_p)|U^2 = 0, ~~ \cH|V^2U^2 = \cH.
\end{equation}
Furthermore, since $\cH|T_{p^2} = (p+1) \cH$, we find that 
\[ 
p\cH = - \cH + \cH|U^2+\cH \otimes \chi_p + \cH|V^2.
\]
Using the above identities we find that 
\[
\tilde{\cH}|U^2 = \cH|U^2- p\cH = \cH-\cH \otimes \chi_p - p\cH|V^2 = \tilde{\cH}.
\] 
Now let 
\[
\tilde{\cG} = \cH - \frac{1}{p} \cH \otimes \chi_p - \cH|V^2.
\]
In a similar way, we find that 
\[
\tilde{\cG}|U^2 = \cH|U^2 - \cH = p\cH - \cH\otimes \chi_p - p \cH|V^2 = p \tilde{\cG}.
\]
Note that $\tilde{\cH} - p\tilde{\cG} = (1-p) \cH$. Acting with $U^2$ on this identity repeatedly we finish our proof of  (i). 

In order to prove (ii), recall that, by \cite[Th\'eor\`eme 2]{Zagier_eis}, the non-analytic function 
\[
\cF(z) = \cH(z) + y^{-1/2} \sum_{n \in \Z} \beta(4\pi n^2 y) e^{-2\pi i n^2 z} \hspace{5mm} (z=x+iy ~~ \text{and} ~~ y>0),
\]
where $\beta(t) = \int_1^\infty u^{-3/2} e^{-ut} ~~ du$ transforms like a modular form of weight $3/2$ on $\Gamma_0(4)$.

Observe that the identity $\tilde{\cH}|U^2=\tilde{\cH}$ together with  \eqref{eq_obvious_identities} imply that
\[
\tilde{\cH} = \cH|U^2 - p\cH.
\]
It is easy to see (using $f|U^2 = p^{-2} \sum_ {m \pmod {p^2}} f(z+m/p^2)$) that the non-analytic part disappears
\[
\cF(z) |U^2 - p\cF = \cH|U^2 - p\cH = \tilde{\cH},
\]
and adopting the argument of \cite[Lemma 1]{Li}  to the half-integral weight (see \cite[Lemma 2.3(1)]{BK} for a generalization)  we obtain 
that $\tilde{\cH} \in M_{3/2}(\Gamma_0(4p))$. 

In order to finish the proof of (ii), we now prove that $\tilde{\cH} \in \gM^-$. 
Since $\tilde{\cH} = \sum \tilde{H}(N)q^n$ with
\begin{equation} \label{eq_t_H}
\tilde{H}(N) = H(N) - \left( \frac{-N}{p} \right)H(N) - pH(N/p^2),
\end{equation}
and the series $\cH$ satisfies the Kohnen plus condition \eqref{eq_Kohnen_plus} so does the series $\tilde{\cH}$. 
The extra condition at $p$ is also easy to verify:
\[
\text{if} ~~ \left( \frac{-N}{p} \right) = 1, ~~ \text{then} ~~ \tilde{H}(N) = H(N)-H(N) - p(N/p^2) =  0.
\]
\end{proof}

\begin{rem}

The fact that $\tilde{\cH}$ is the unique (up to normalization) Eisenstein series in the Kohnen plus subspace of $M_{3/2}(\Gamma_0(4p))$ (satisfying \eqref{eq_Kohnen_plus}) is well known (see  e.g. \cite[Theorem 1(I)]{Wang_Pei} for a proof).

\end{rem}



\section{Proofs of the Theorems}  \label {sec_conv_rate}

Recall that $p \geq 3$ is a prime. 
We start with proving the existence and some properties of the limit $\tilde{g}_D$ claimed in Theorem \ref{thm_general}. To begin with, we note that the recursive construction procedure of the modular forms $g_D$ presented in \cite{Zagier_traces} guarantees that $B(D,d) \in \Z$. 

The following proposition summarizes some immediate corollaries from a result of Jenkins \cite{Jenkins}. Recall from \cite[Theorem 1.1]{Jenkins} that 

\begin{multline} \label{eq_jenkins}
B(D, p^{2n}d) = p^nB(p^{2n} D,d) \\
+ \sum_{k=0}^{n-1} \left( \frac{D}{p} \right)^{n-k-1} \left( B(D/p^2, p{^{2k}}d) - p^{k+1}B(p^{2k}D  , d/p^2)  \right) \\ 
+  \sum_{k=0}^{n-1} \left( \frac{D}{p} \right)^{n-k-1}  \left( \left( \left( \frac{D}{p} \right) - \left( \frac{-d}{p} \right) \right) p^k B(p^kD,d) \right)
\end{multline}

We will derive the following proposition from \eqref{eq_jenkins}.

\begin{prop} \label{prop_lim_jenkins}

Let $D=p^{2t}D_0$ with $p^2 \nmid D_0$, and
let $\epsilon = - \left( \frac{D_0}{p} \right)$.  

\begin{itemize}

\item[\normalfont(i)]
The $p$-adic limit 
\[
\tilde{g}_D = \lim_{n \rightarrow \infty} (-1)^{n\epsilon(\epsilon +1)/2}g_D|U^{2n} 
\]
exists.

\item[\normalfont(ii)]
If $\epsilon \neq 0$, then $\tilde{g}_D  \in \gM^\epsilon$.

\item[\normalfont(iii)]
If $p|D_0$ then $\tilde{g}_D = 0$.

\item[\normalfont(iv)]
If $p^2|D$,  then we have the reduction $g_D|U^{2n} \equiv g_{D/p^2}|U^{2n-2} \pmod {p^{n-1}}$.

\end{itemize}

\end{prop}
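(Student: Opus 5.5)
The plan is to read off each coefficient of $g_D|U^{2n}$ from Jenkins' formula \eqref{eq_jenkins}. The coefficient of $q^d$ in $g_D|U^{2n}$ is $B(D,p^{2n}d)$, and all the quantities $B(\cdot,\cdot)$ are integers. So every term carrying a factor $p^k$ is $\equiv 0 \pmod{p^k}$, and $p$-adic convergence only has to be checked modulo $p^m$ for each fixed $m$, uniformly in $d$. (The principal part $q^{-D}|U^{2n}$ vanishes once $p^{2n}\nmid D$.) I would first prove (iv), then treat $t=0$ directly, and finally reduce $t>0$ to $t=0$ by induction on $t$.

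\emph{Step 1: (iv).} Assume $p^2\mid D$, so $\left(\frac{D}{p}\right)=0$. In \eqref{eq_jenkins} only the summands with $k=n-1$ survive, because $\left(\frac{D}{p}\right)^{n-k-1}=0$ otherwise. This gives
\[
B(D,p^{2n}d)=p^nB(p^{2n}D,d)+B(D/p^2,p^{2n-2}d)-p^nB(p^{2n-2}D,d/p^2)-p^{n-1}\left(\frac{-d}{p}\right)B(p^{n-1}D,d).
\]
Hence $B(D,p^{2n}d)\equiv B(D/p^2,p^{2(n-1)}d)\pmod{p^{n-1}}$, which is the coefficientwise statement of (iv).

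\emph{Step 2: $t=0$.} Here $p^2\nmid D$, so every term $B(D/p^2,\cdot)$ vanishes. Reducing \eqref{eq_jenkins} modulo $p^m$ with $n>m$ kills $p^nB(p^{2n}D,d)$, all terms $p^{k+1}B(p^{2k}D,d/p^2)$ with $k\ge m-1$, and all terms $p^kB(p^kD,d)$ with $k\ge m$. What survives is
\[
B(D,p^{2n}d)\equiv \left(\frac{D}{p}\right)^{n-1} C_m(d)\pmod{p^m},
\]
where $C_m(d)$ is a finite sum that does not depend on $n$. There are now two cases.
\begin{itemize}
\item If $p\mid D$ (so $\epsilon=0$), then only the $k=n-1$ term survives, and it carries $p^{n-1}$. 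So $g_D|U^{2n}\equiv 0\pmod{p^{n-1}}$, which proves (i) and (iii) in this case.
\item If $\epsilon=\pm1$, then the sign $(-1)^{n\epsilon(\epsilon+1)/2}$ equals $(-1)^n$ exactly when $\left(\frac{D}{p}\right)=-1$. It therefore cancels $\left(\frac{D}{p}\right)^{n}$, so the normalized coefficients are eventually constant modulo $p^m$, and the limit exists.
\end{itemize}
For (ii), suppose $\left(\frac{-d}{p}\right)=\left(\frac{D}{p}\right)=-\epsilon$. Then the factor $\left(\frac{D}{p}\right)-\left(\frac{-d}{p}\right)$ vanishes, and the remaining terms are divisible by $p^{n}$; hence the limit coefficient is $0$. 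This is exactly the extra vanishing condition defining $\gM^\epsilon$. The Kohnen plus condition \eqref{eq_Kohnen_plus} passes from $g_D$ to $g_D|U^{2n}$, and then to the limit, because $p$ is odd.

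\emph{Step 3: $t>0$.} By (iv), $g_D|U^{2n}\equiv g_{D/p^2}|U^{2(n-1)}\pmod{p^{n-1}}$. Moreover $D/p^2$ has the same $D_0$, hence the same $\epsilon$. By induction on $t$, the normalized limit for $D$ therefore exists and equals the limit for $D/p^2$, up to the sign $(-1)^{\epsilon(\epsilon+1)/2}$ coming from the index shift. Properties (ii) and (iii) transfer unchanged, since they are preserved by nonzero scalars.

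The main obstacle is the bookkeeping in Step 2, in particular the range of $k$ in each of the three sums. One must check that the terms which are not killed form an $n$-independent finite expression multiplied by $\left(\frac{D}{p}\right)^{n-1}$, uniformly in $d$; this is where integrality of the $B(\cdot,\cdot)$ is used. One must also check the sign normalization in the case $\epsilon=1$.
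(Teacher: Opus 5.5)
Your proposal is correct and takes essentially the same route as the paper. Like the paper, you derive (iv) from Jenkins' formula, reduce (i)--(iii) to $t=0$ by induction on $t$, and read off convergence and the $\gM^\epsilon$ vanishing coefficientwise; your factoring out of $\left(\frac{D}{p}\right)^{n-1}$ only merges the paper's two cases $\left(\frac{D}{p}\right)=\pm1$. One small point in (ii): the terms $p^{k+1}B(p^{2k}D,d/p^2)$ are not divisible by $p^n$; they vanish because $p\nmid d$, and you should say so.
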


\begin{proof}

We start with the observation that, by definition, $m = 1,2 \pmod{4}$ implies $B(D,m) =0$, and, since $p^2 \equiv 1 \pmod 4$, 
the limit series $\tilde{g}_D$, if exists, satisies \eqref{eq_Kohnen_plus}.

In order to prove (iv), assume that $p^2|D$ and observe that   \eqref{eq_jenkins} implies, since $\left(\frac{D}{p}\right)=0$, that
\[
B(D,p^{2n}d) \equiv B(D/p^2,p^{2n-2}d) - \left( \frac{-d}{p} \right) p^{n-1} B(p^{2k}D,d) \pmod{p^n}.
\]
Statement (iv) follows from that, and induction in $t$ now allows us to prove statements (i), (ii), and (iii) under the assumption that $t=0$ (i.e. $p^2 \nmid D$).

We now assume that $p||D$. Thus $\epsilon =-\left( \frac{D}{p} \right) =0$, and the sums in  \eqref{eq_jenkins}  degenerate to single terms implying that
\[
B(D,p^{2n}d) \equiv - \left( \frac{-d}{p} \right)p^{n-1} B(p^{2k}D,d) \pmod{p^n}.
\]
Thus $\lim_{n \rightarrow \infty} B(D,p^{2n}d) =0$ for every $d$.
Assertion (i) in the case $\epsilon =0$ and assertion (iii) follow from that.

We now consider the two cases $\left(\frac{D}{p} \right) =\pm 1$ separately. 

Assume firstly that $\left(\frac{D}{p} \right) =1$ (thus $\epsilon=-1$). Then \eqref{eq_jenkins} becomes
\begin{multline} \label{eq_B_1_lim}
B(D, p^{2n}d) = p^nB(p^{2n}D,d) \\ - \sum_{k=0}^{n-1} p^{k+1}B(p^{2k}D,d/p^2) + \sum_{k=0}^{n-1} \left(1- \left( \frac{-d}{p} \right) \right) p^k B(p^{2k}D,d),
\end{multline}
and implies that $\lim_{n \rightarrow \infty} B(D,p^{2n}d)$ exists for every $d$, therefore the limit $\tilde{g}_D= \lim_{n \rightarrow \infty} g_D|U^{2n}$ exists.
Furthermore, if $ \left( \frac{-d}{p} \right) = 1$ then \eqref{eq_B_1_lim} becomes $B(D, p^{2n}d) = p^nB(p^{2n}D,d)$, therefore $\lim_{n \rightarrow \infty} B(D, p^{2n}d) =0$,
in other words, $\tilde{g}_D \in \gM^-$ as required. That finishes the proof of (i) and (ii) under the assumption that $\left(\frac{D}{p} \right) =1$.

Assume now that  $\left(\frac{D}{p} \right) =-1$ (thus $\epsilon=1$).  Then \eqref{eq_jenkins} becomes
\begin{multline*} 
B(D, p^{2n}d) =  p^nB(p^{2n}D,d)   \\ - \sum_{k=0}^{n-1} (-1)^{n-k-1}p^{k+1}B(p^{2k}D,d/p^2)  -  \sum_{k=0}^{n-1}  (-1)^{n-k-1} \left( 1 + \left( \frac{-d}{p} \right) \right) p^k B(p^{2k}D,d) \\
\equiv (-1)^{n} \sum_{k=0}^{n-1} (-1)^k p^k \left( pB(p^{2k}D,d/p^2) + \left( 1+ \left(\frac{-d}{p} \right) \right)B(p^{2k}D,d) \right) \pmod{p^n},
\end{multline*}
and implies that $\lim_{n \rightarrow \infty} (-1)^nB(D,p^{2n}d)$ exists for every $d$, therefore $\tilde{g}_D= \lim_{n \rightarrow \infty} (-1)^ng_D|U^{2n}$ exists.
Furthermore,  if $ \left( \frac{-d}{p} \right) = -1$, then again $B(D, p^{2n}d) = p^nB(p^{2n}D,d)$, therefore $\lim_{n \rightarrow \infty} B(D, p^{2n}d) =0$,
in other words, $\tilde{g}_D \in \gM^+$ as required. That finishes the proof of (i) and (ii) under the assumption that  $\left(\frac{D}{p} \right) =-1$.
\end{proof}

\begin{proof}[Proof of Theorem \ref{thm_general}]
Reduction to the case $t=0$ follows from Proposition \ref{prop_lim_jenkins}(iv). 
The case $\epsilon=0$ follows at once from Proposition \ref{prop_lim_jenkins}(iii).
We thus now assume that $t=0$ and $\epsilon \neq 0$ (i.e. $p \nmid D_0=D$). 

Define for convenience 
\[
\delta_\square(D) = \begin{cases} 1 & \text{if $D$ is a perfect square} \\ 0 & \text{otherwise} \end{cases}
\]
It follows from \cite[Theorem 4]{Zagier_traces} that the constant term of the $q$-expansion  of $g_D$ vanishes whenever $D$ is not a perfect square, and equals $-2$ otherwise.
Thus
\[
h_D = g_D + \delta_\square(D)\frac{24}{p-1} \tilde{\cH}
\]
is a weight $3/2$ weak modular form on $\Gamma_0(4p)$ (satisfying \eqref{eq_Kohnen_plus}).

In order to finish the proof of Theorem \ref{thm_general}, it now suffices to show that 
\begin{equation} \label{eq_limit_2}
\tilde{h}_D=\lim_{n \rightarrow \infty} (-1)^{n \epsilon (\epsilon+1)/2} h_D|U^{2n} \in S_{3/2}(\Gamma_0(4p))_p
\end{equation}
since the existence of the limit  $\tilde{h}_D \in \gM^\epsilon$ follows from Propositions \ref{prop_lim_jenkins} and \ref{prop_tilde_H}.

Since the constant term of $h_D$ at infinity is zero, and $\tilde{\cH}$ is the unique (by \cite[Theorem 1(I)]{Wang_Pei}) Eisenstein series in the Kohnen plus subspace of $M_{3/2}(\Gamma_0(4p))$ (satisfying \eqref{eq_Kohnen_plus}),
the constant term of $h_D$ at every cusp is zero. 

We now consider $h_D \in \tilde{S}_{3/2}   (4p, \Q_p, p^{-r})$ as an element of the 
cuspidal subspace of the Banach space of overconvergent weight $3/2$ modular forms  defined in \cite{Ramsey}. 
As usual, we fix the overconvergence rate $1/(p+1) < r < p/(p+1)$ arbitrarily.
The operator $U_{p^2}$ acting on $\tilde{S}_{3/2}   (4p, \Q_p, p^{-r})$ is constructed geometrically in \cite[Section 5]{Ramsey} and is shown in \cite[Proposition 5.5]{Ramsey} to coincide with our $U^2$ on $q$-expansions. Repeated application of $U_{p^2}$ projects to the finite-dimensional subspace of the Banach space which consists of the cusp forms of zero slope. 
Using \cite[Lemma 7.2.1]{Hida_book} we conclude that, 
 in the topology of the Banach space (see \cite[Section 2.2]{Ramsey} for the definition of the Banach norm), the limit as $\tilde{F}_D:= \lim_{n \rightarrow \infty} h_D|U_{p^2}^{n!}$  exists. 
 (Note that  the operator $\lim_{n \rightarrow \infty} U_{p^2}^{n!}$ is  the standard ordinary projector.) 
 By  \cite[Theorem 6.1]{Ramsey} $\tilde{F}_D$ must be a classical cusp form, which, by \cite[Proposition 6.2]{Ramsey}, belongs to the space of overconvergent $p$-adic modular forms constructed in \cite{Ramsey2006}, thus the $q$-expansion of $\tilde{F}_D$ at infinity coincides with that of an element of $S_{3/2}(\Gamma_0(4p))_p$. 
Since the convergence in the Banach space topology implies the convergence in the topology determined by the norm $\lVert f \rVert = p^{-\ord_p (f)}$ (for $f \in \Q_p \otimes \Z_p\llparenthesis q \rrparenthesis$), we conclude that  $\tilde{F}_D = \tilde{h}_D$. (Note that we can get away with the limit \eqref{eq_limit_2} defining $\tilde{h}_D$ 
instead of the more general ordinary projector $\lim_{n \rightarrow \infty} h_D| U_{p^2}^{n!}$ because of the specific of our setting: all $p$-ordinary eigenforms in $S$  are new at  $p$, thus have the $U^2$ eigenvalues of $\pm1$.) 
Thus $\tilde{h}_D \in S_{3/2}(\Gamma_0(4p))_p$ as required.
\end{proof}




We now derive Theorem \ref{thm_cong_11} as a special case.

\begin{proof}[Proof of Theorem\ref{thm_cong_11}]
Let $p=11$, and assume that $-d$ is fundamental. 

Since $E \simeq X_0(11)$ is modular, the vanishing of $L(E^d,1)$ is equivalent to the vanishing of the central special value of the twisted $L$-function 
$L\left(f, \left( \frac{-d}{\cdot} \right) \right)$ associated with the (unique up to a normalization) modular form $F \in S_2(\Gamma_0(11))$. 
We observe following \cite{Ant_Kohnen} that the space $S^-$ is also one-dimensional generated by $G \in S^-$, and the modular forms $F$ and $G$ are related by the Shimura correspondence (see also \cite{Kohnen} for details). 
Explicitly (see \cite{Ant_Kohnen}), in terms of the $q = \exp(2 \pi i z)$ variable, 
\[
G= \sum_{n>0}c(n) q^n = \left( \eta(q^2) \eta(q^{22}) \theta(q^{11}) \right) |U_4 ~~ \text{and} ~~ F= \eta(q)^2 \eta(q^{11})^2,
\]
where $\eta(q) = q^{1/24} \prod_{n \geq 1} (1-q^n)$ is the Dedekind $\eta$-function, and $\theta(q) = \sum_{n \in \Z} q^{n^2}$ is the theta-function.
It is nowadays standard to apply the result of Waldspurger \cite{Waldspurger} which, in this situation allows one to conclude that, for a fundamental discriminant $-d<0$,
the central critical value  $L(E^{-d},1)=0$ vanishes if an only if $c(d)=0$.
On the other hand, Theorem  \ref{thm_general} (with $D=1$, therefore $\epsilon=-1$) implies that 
\begin{equation} \label{eq_la}
\lim_{n \rightarrow \infty} g_1|U^{2n} = \tilde{g}_1 = -\frac{24}{11-1} \tilde{\cH} + \lambda G
\end{equation}
with some $\lambda \in \Q_p$.
We claim that $\lambda \neq 0$. In order to support this claim, we calculate the coefficient of $q^3$ of the left and right sides of \eqref{eq_la}.
It follows from \eqref{eq_B_1_lim} that the coefficient of $q^3$ in $\tilde{g}_1$ is 
\[
\lim_{n \rightarrow \infty} B(1,3p^{2n}) = 2 \sum_{k=0}^\infty p^k B(p^{2k},3) \equiv B(1,3) \pmod p
\] 
(with $p=11$).

The coefficient of $q^3$ in $\tilde{\cH}$, taking into the account \eqref{eq_t_H} and $\left( \frac{-3}{11} \right) = -1$, is
\[
\tilde{H}(3) = H(3) - H(3) \left( \frac{-3}{11} \right) = 2H(3) = 2/3
\]
Since 
\[
248 = B(1,3) \not\equiv  -\frac{24}{10} \cdot \frac{2}{3} = -\frac{8}{5} \pmod {11},
\]
we must have that $\lambda \neq 0$ in \eqref{eq_la}.

Since $\lambda \neq 0$, for a fundamental discriminant $-d$, it follows from \eqref{eq_la} that $c(d)=0$ if and only if 
\[
\lim_{n \rightarrow \infty} B(1,11^{2n}d) = -\frac{12}{5} \tilde{H}(d) = -\frac{24}{5} H(d),
\]
the last equality follows from \eqref{eq_t_H} and the condition $\left( \frac{-d}{11} \right) = -1$.

Finally, the congruence $B(1, 11^{2n}d) = B(11^{2n}d) \equiv 2B(d) = 2 B(1,d) \pmod {11}$ for $n>0$ and a fundamental discriminant $-d$ with  $\left( \frac{-d}{11} \right) = -1$ follows at once from \eqref{eq_B_1_lim}.

\end{proof}

\begin{proof}[Proof of Theorem \ref{thm_3_5_7_11}]
In order to prove Theorem \ref{thm_3_5_7_11}, we observe that, by  \cite[Proposition 5.1]{Ramsey}, the operator 
$U_{p^2}$ acting on $\tilde{S}_{3/2}   (4p, \Q_p, p^{-r})$  for $1/(p+1) < r < p/(p+1)$ is compact, therefore has a discrete spectrum, and invoke the asymptotic expansion lemma \cite[Proposition 1]{Gouvea_Mazur}. Specifically, in our case, since the closedness in the Banach space norm implies the closedness of the $q$-expansions, \cite[Proposition 1]{Gouvea_Mazur} implies that
\[
\ord_p \left(\tilde{g}_D - g_D\right)|U^{2n} = \cO(sn) ~~\text{as} ~~ n \rightarrow \infty,
\]
where $s$ is the smallest non-zero slope in the spectrum of  $U_{p^2}$ acting on the Banach space $\tilde{S}_{3/2}   (4p, \Q_p, p^{-r})$. It follows from \cite[Lemma 5.3]{Ramsey_sh} that every slope in the spectrum of  $U_{p^2}=U^2$ acting on 
$\tilde{S}_{3/2}   (4p, \Q_p, p^{-r})$ appears as a slope of the spectrum of the $U_p = U$ acting on the Banach space 
$\tilde{S}_{2}   (p, \Q_p, p^{-r})$ of weight $2$ overconvergent forms. Thus the problem to find the minimal slope $s$ reduces to calculating the minimal slope in the space 
$S_{2+(p-1)p^A}(SL_2(\Z))$ of classical cusp forms of trivial level and weight $2+(p-1)p^A$ with $A$ big enough. We use  the explicit estimate obtained by Wan \cite{Wan} in order to pick the appropriate values of $A$, and computer calculations to find the minimal slope $s$ in the specific spaces of the integral weight classical cusp forms. For the latter, we used the GP calculator  \cite{PARI2}.
That finishes the proof of Theorem \ref{thm_3_5_7_11}. \end{proof}

\end{document}